\begin{document}

 \bibliographystyle{plain}
 \newtheorem{theorem}{Theorem}
 \newtheorem{lemma}[theorem]{Lemma}
 \newtheorem{corollary}[theorem]{Corollary}
 \newtheorem{problem}[theorem]{Problem}
 \newtheorem{conjecture}[theorem]{Conjecture}
 \newtheorem{definition}[theorem]{Definition}
 \newtheorem{prop}[theorem]{Proposition}
 \numberwithin{equation}{section}
 \numberwithin{theorem}{section}

 \newcommand{\mo}{~\mathrm{mod}~}
 \newcommand{\mc}{\mathcal}
 \newcommand{\rar}{\rightarrow}
 \newcommand{\Rar}{\Rightarrow}
 \newcommand{\lar}{\leftarrow}
 \newcommand{\lrar}{\leftrightarrow}
 \newcommand{\Lrar}{\Leftrightarrow}
 \newcommand{\zpz}{\mathbb{Z}/p\mathbb{Z}}
 \newcommand{\mbb}{\mathbb}
 \newcommand{\B}{\mc{B}}
 \newcommand{\cc}{\mc{C}}
 \newcommand{\D}{\mc{D}}
 \newcommand{\E}{\mc{E}}
 \newcommand{\F}{\mathbb{F}}
 \newcommand{\G}{\mc{G}}
  \newcommand{\ZG}{\Z (G)}
 \newcommand{\FN}{\F_n}
 \newcommand{\I}{\mc{I}}
 \newcommand{\J}{\mc{J}}
 \newcommand{\M}{\mc{M}}
 \newcommand{\nn}{\mc{N}}
 \newcommand{\qq}{\mc{Q}}
 \newcommand{\PP}{\mc{P}}
 \newcommand{\U}{\mc{U}}
 \newcommand{\X}{\mc{X}}
 \newcommand{\Y}{\mc{Y}}
 \newcommand{\itQ}{\mc{Q}}
 \newcommand{\sgn}{\mathrm{sgn}}
 \newcommand{\C}{\mathbb{C}}
 \newcommand{\R}{\mathbb{R}}
 \newcommand{\T}{\mathbb{T}}
 \newcommand{\N}{\mathbb{N}}
 \newcommand{\Q}{\mathbb{Q}}
 \newcommand{\Z}{\mathbb{Z}}
 \newcommand{\A}{\mathbb{A}}
 \newcommand{\ff}{\mathfrak F}
 \newcommand{\fb}{f_{\beta}}
 \newcommand{\fg}{f_{\gamma}}
 \newcommand{\gb}{g_{\beta}}
 \newcommand{\vphi}{\varphi}
 \newcommand{\whXq}{\widehat{X}_q(0)}
 \newcommand{\Xnn}{g_{n,N}}
 \newcommand{\lf}{\left\lfloor}
 \newcommand{\rf}{\right\rfloor}
 \newcommand{\lQx}{L_Q(x)}
 \newcommand{\lQQ}{\frac{\lQx}{Q}}
 \newcommand{\rQx}{R_Q(x)}
 \newcommand{\rQQ}{\frac{\rQx}{Q}}
 \newcommand{\elQ}{\ell_Q(\alpha )}
 \newcommand{\oa}{\overline{a}}
 \newcommand{\oI}{\overline{I}}
 \newcommand{\dx}{\text{\rm d}x}
 \newcommand{\dy}{\text{\rm d}y}
\newcommand{\cal}[1]{\mathcal{#1}}
\newcommand{\cH}{{\cal H}}
\newcommand{\diam}{\operatorname{diam}}
\newcommand{\bx}{\mathbf{x}}
\newcommand{\Ps}{\varphi}

\parskip=0.5ex

\title[Bounded remainder sets for rotations on the adelic torus]{Bounded remainder sets for\\ rotations on the adelic torus}
\author{Joanna~Furno, Alan~Haynes, Henna~Koivusalo}

\thanks{HK: Work carried out on visit to the University of Houston, supported by the  V\"ais\"al\"a fund.\\
\phantom{A..}MSC 2010: 11J61, 11K38, 37A45}
\keywords{Bounded remainder sets, adeles, rotations on compact groups}

\allowdisplaybreaks

\begin{abstract}
In this paper we give an explicit construction of bounded remainder sets of all possible volumes, for any irrational rotation on the adelic torus $\A/\Q$. Our construction involves ideas from dynamical systems and harmonic analysis on the adeles, as well as a geometric argument which originated in the study of deformation properties of mathematical quasicrystals.
\end{abstract}

\maketitle

\section{Introduction}
Let $G$ be a compact, metrizable, Abelian group, written additively. There is a unique Haar probability measure on $G$ and, for each $\alpha\in G$, the measure-preserving map $T_\alpha: G\rar\ G$ defined by $T_\alpha(x)=x+\alpha$ is referred to as rotation by $\alpha$ on $G$. It is well known (see \cite[Theorem 4.14]{EinsWard2011}) that the map $T_\alpha$ will be ergodic, and also uniquely ergodic, if and only if the collection of points $\{T^n(0)\}$ is dense in $G$. Another equivalent condition is that, for any Borel set $A\subseteq G$ with boundary of measure $0$ and for every $x\in G$, we have that
\begin{equation}
  \lim_{N\rar\infty}\left(\frac{1}{N}\sum_{n=0}^{N-1}\chi_A(x+n\alpha)-|A|\right)=0,
\end{equation}
where $|A|$ denotes the Haar measure of $A$ (the equivalence of these conditions follows from the proof of \cite[Theorem 6.19]{Walt1982} together with Remark 3 after \cite[Theorem 6.4]{Walt1982}).

For many applications, we would like to know something about the rate of convergence of the above limit, in the case when $T_\alpha$ is ergodic. This is the topic of the well studied field of discrepancy theory, for which we refer the reader to \cite{DrmoTich1997} and \cite{KuipNied1974}. In this paper we are going to study the specific problem of constructing measurable sets $A$ with the property that there exists a constant $C=C(A)>0$ such that, for almost every $x\in G$ and for any $N\in\N$,
\begin{equation}
  \left|\sum_{n=0}^{N-1}\chi_A(x+n\alpha)-N|A|\right|\le C.
\end{equation}
Such sets $A$ are called bounded remainder sets (BRS's) for $T_\alpha$. As in previous work on BRS's, we follow the convention of allowing them to be multisets, i.e. so that $\chi_A$ is allowed to be a finite sum of indicator functions of measurable sets.

Bounded remainder sets for rotations on the groups $\T^s=\R^s/\Z^s, s\in\N,$ have been extensively studied since the 1920's. For $s=1$ the first results were obtained by Hecke \cite{Heck1922}, Ostrowski \cite{Ostr1927/30}, and Kesten \cite{Kest1966/67}, who proved that, for an irrational rotation by $\alpha$ on the circle $\R/\Z$, an interval $A$ will be a BRS if and only if
\begin{equation}
|A|\in\alpha\Z+\Z.
\end{equation}
In the $s=2$ case, Sz\"{u}sz \cite{Szus1954} constructed infinite families of BRS parallelograms. The $s\ge 2$ case was subsequently studied by Liardet \cite{Liar1987}, Rauzy \cite{Rauz1972}, Ferenczi \cite{Fere1992}, Oren \cite{Oren1982}, and Zhuravlev \cite{Zhur2005,Zhur2011,Zhur2012}, as well as other authors. A landmark result was recently attained by Grepstad and Lev \cite{GrepLev2015}, who demonstrated that, for any $s$ and for any $\alpha$ for which $T_\alpha$ is ergodic, the set of all volumes of BRS's for $\alpha$ is
\begin{equation}
  \{n\cdot\alpha +m\ge 0:n\in\Z^s,m\in\Z\}.
\end{equation}
Grepstad and Lev's results also provide examples of parallelotope BRS's of all possible volumes, and they thereby effectively complete the classification of volumes of BRS's for toral rotations in any dimension.

Returning to the level of generality with which we began this section, it is well known that there are uncountably many topological group isomorphism classes of connected, compact, metrizable, Abelian groups. The goal of this paper is to extend the classification of BRS's from the countable family $\T^s, s\in\N$, to an uncountable collection of connected, compact subgroups of the adelic torus $\A/\Q$. Our first result is the following theorem.
\begin{theorem}\label{thm.BRSforA/Q}
  Suppose that $\alpha=(\alpha_\infty,\alpha_2,\alpha_3,\ldots)\in\A/\Q$  and that $\alpha_\infty\notin\Q$. Then the collection of all volumes of BRS's for $T_\alpha$ is
  \begin{equation}\label{eqn.SetOfVols}
    \left\{-\gamma\alpha_\infty+\sum_p\{\gamma\alpha_p\}_p+n\ge 0:\gamma\in\Q,n\in\Z\right\},
  \end{equation}
  where $\{\cdot\}_p:\Q_p\rar\R$ is the $p$-adic fractional part.
\end{theorem}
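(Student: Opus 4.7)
The plan is to reduce to the classical $\T^1$ case via the level-$N$ projections of the adelic torus. After translating $\alpha$ by the rational $\sum_p\{\alpha_p\}_p$ I may assume $\alpha_p\in\Z_p$ for every prime $p$ (this does not change $T_\alpha$ on $\A/\Q$). For each $N\in\N$, the quotient $\A/(\Q+N\hat{\Z})$ is isomorphic as a compact group to $\R/N\Z$; this can be verified on dual groups, since characters $\chi_\gamma$ of $\A/\Q$ that are trivial on $N\hat{\Z}$ correspond precisely to $\gamma\in\frac{1}{N}\Z\cong\widehat{\R/N\Z}$. Under the resulting projection $\pi_N:\A/\Q\to\R/N\Z$, the rotation $T_\alpha$ conjugates to rotation of $\R/N\Z$ by $\theta_N:=\alpha_\infty-m_N$, where $m_N:=N\sum_p\{\alpha_p/N\}_p$ is the unique integer in $[0,N)$ congruent to $\alpha_p$ modulo $p^{v_p(N)}$ for each prime $p$.

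For the direction that every element of the set (\ref{eqn.SetOfVols}) is attained, given $\gamma=a/N\in\Q$ and $n\in\Z$ with $v=-\gamma\alpha_\infty+\sum_p\{\gamma\alpha_p\}_p+n\ge 0$, a short calculation (comparing $k\{\alpha_p/N\}_p$ with $\{k\alpha_p/N\}_p$ modulo $\Z$) shows that $v$ belongs to the set $\{k\theta_N/N+\ell\ge 0:k,\ell\in\Z\}$ of BRS volumes for the ergodic rotation of $\R/N\Z$ by $\theta_N$, as classified by Kesten and Hecke. Taking any such BRS $B\subset\R/N\Z$ of normalized measure $v$, the preimage $A=\pi_N^{-1}(B)\subset\A/\Q$ has volume $v$ and inherits the BRS property because $\pi_N$ is a measure-preserving semi-conjugacy of dynamical systems.

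Conversely, if $A\subset\A/\Q$ is a BRS then $\chi_A-|A|$ is an $L^\infty$-coboundary for $T_\alpha$, and expanding in the Pontryagin dual $\widehat{\A/\Q}=\Q$ yields the transfer identity $\hat g(\gamma)=\hat\chi_A(\gamma)/(\chi_\gamma(\alpha)-1)$ for each $\gamma\ne 0$, with $\chi_\gamma(\alpha)=\exp(2\pi i(\gamma\alpha_\infty-\sum_p\{\gamma\alpha_p\}_p))$. Combining boundedness of $g$ with an equidecomposability argument in the spirit of Grepstad and Lev (originating in the deformation theory of mathematical quasicrystals) should force $|A|\bmod\Z$ into the subgroup of $\R/\Z$ generated by $\{-\gamma\alpha_\infty+\sum_p\{\gamma\alpha_p\}_p\bmod\Z:\gamma\in\Q\}$. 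The main obstacle is this necessity direction: the Grepstad--Lev framework for $\T^s$ exploits the finitely generated nature of $\widehat{\T^s}=\Z^s$, whereas $\widehat{\A/\Q}=\Q$ is only countably generated, so one must organize the argument across all levels $N$ simultaneously and verify that the resulting constraints assemble into exactly the claimed set with no spurious additional volumes.
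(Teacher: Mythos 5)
Your constructive (sufficiency) half is correct and takes a genuinely different route from the paper. You exploit the inverse-limit structure of $\A/\Q$: after normalizing so that $\alpha_p\in\Z_p$ for all $p$, you pass to the finite-level factor $\A/(\Q+\widehat{\Z}N)\cong\R/N\Z$, check that every element of \eqref{eqn.SetOfVols} with $\gamma\in\frac1N\Z$ lies in the Hecke--Kesten set for the factor rotation, and pull back an interval BRS through the measure-preserving factor map. That works (you should also record that the set \eqref{eqn.SetOfVols} is itself unchanged when $\alpha$ is translated by $\sum_p\{\alpha_p\}_p$, which follows from $\{x+y\}_p-\{x\}_p-\{y\}_p\in\Z$; and your $m_N$ need not lie in $[0,N)$, though that is harmless). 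The paper instead builds BRS's directly on $X_{\mc{Q}}$ via an adelic cut-and-project construction, using the Weil product formula to compute window volumes. Your reduction is shorter and more elementary; the paper's construction produces explicit windows (products of a real interval with $p$-adic balls) inside the solenoid itself rather than only preimages of circle intervals, and it is the version the authors expect to generalize to $(\A/\Q)^s$.

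The genuine gap is in the necessity direction, which you yourself flag as ``the main obstacle,'' and the tool you reach for is the wrong one. The transfer identity $\widehat{g}(\gamma)=\widehat{\chi_A}(\gamma)/(1-\chi_\gamma(\alpha))$ carries no information at $\gamma=0$ and hence says nothing about $|A|$ by itself, and no equidecomposability argument \`a la Grepstad--Lev is needed here (in their work that machinery is used for the \emph{construction}, not for restricting volumes; the countable generation of $\widehat{\A/\Q}=\Q$ is a red herring). The step you are missing is short: from the coboundary equation $\chi_A-|A|=g-g\circ T_\alpha$ with $g$ bounded, set $h=e^{2\pi i g}$; since $\chi_A$ is integer-valued this gives $h\circ T_\alpha=e^{2\pi i|A|}\,h$, so $h$ is a nonzero $L^2$ eigenfunction of the ergodic rotation $T_\alpha$ with eigenvalue $e^{2\pi i|A|}$. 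Comparing Fourier coefficients, some $\widehat{h}(\gamma)\neq0$ forces $e^{2\pi i|A|}=\chi_\gamma(\alpha)=e\bigl(-\gamma\alpha_\infty+\sum_p\{\gamma\alpha_p\}_p\bigr)$ for that $\gamma\in\Q$, which pins $|A|$ into \eqref{eqn.SetOfVols}. You do first need the existence of a \emph{bounded measurable} transfer function $g$ for any BRS (the paper's Lemma \ref{lem.CoBound}, obtained by taking a liminf of the Birkhoff sums); with that and the eigenfunction computation your argument closes.
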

Note that the requirement that $\alpha_\infty\not\in\Q$ is equivalent to the condition that $\{n\alpha\}_{n\in\N}$ is dense in $\A/\Q$ (we will verify this claim in the next section). Our proof of Theorem \ref{thm.BRSforA/Q} uses a connection between BRS's and deformation properties of mathematical quasicrystals. This connection was observed indirectly by physicists Duneau and Oguey in \cite{DuneOgue90}, and later developed by two of the authors together with Michael Kelly in \cite{HaynKoiv2016,HaynKellKoiv2017} to give a concise geometric proof of the above mentioned result of Grepstad and Lev.

As indicated in our discussion above, we can generalize Theorem \ref{thm.BRSforA/Q} as follows. Let $\mc{P}$ denote the collection of all prime numbers. Suppose $\mc{Q}$ is a (finite or infinite) subset of $\mc{P}$, write $\mc{Q}=\{p_1,p_2,\ldots\}$, and let $\A_\mc{Q}$ be the projection of $\A$ onto the coordinates indexed by the infinite place and the elements of $\mc{Q}$. We take $\A_\mc{Q}$ with the usual topology, which is also the final topology with respect to this projection. The additive group $\Gamma_\mc{Q}=\Z[1/p_1,1/p_2,\ldots ]$ can be embedded diagonally in $\A_\mc{Q}$, and we identify it with its image under this embedding. We then define
\begin{equation}
  X_\mc{Q}=\A_\mc{Q}/\Gamma_\mc{Q},
\end{equation}
which is easily seen to be a connected, compact, metrizable, Abelian group. Alternatively, $X_\mc{Q}$ is isomorphic as a topological group to the quotient of $\A/\Q$ by the subgroup consisting of cosets $\alpha+\Q$ with the property that $\alpha_\infty=0,\alpha_p\in\Z_p$ for all $p\in\mc{P}$, and $\alpha_p=0$ whenever $p\notin \mc{Q}$. We will denote elements of $\A_\mc{Q}$ and of $X_\mc{Q}$ by $(x_\infty,x_{p_1},x_{p_2},\ldots)$ and, for $\alpha\in X_\mc{Q}$, we write $T_\alpha:X_\mc{Q}\rar X_\mc{Q}$ for the transformation which maps $x$ to $x+\alpha$.
\begin{theorem}\label{thm.BRSforSolenoids}
    Suppose that $\mc{Q}\subseteq\mc{P}$ and that $X_\mc{Q}$ is defined as above. Further, suppose that $\alpha=(\alpha_\infty,\alpha_{p_1},\alpha_{p_2},\ldots)\in X_\mc{Q}$  and that $\alpha_\infty\notin\Q$. Then the collection of all volumes of BRS's for $T_\alpha$ is
  \begin{equation}\label{eqn.SetOfVols2}
    \left\{-\gamma\alpha_\infty+\sum_{p\in\mc{Q}}\{\gamma\alpha_p\}_p+n\ge 0:\gamma\in\Gamma_\mc{Q},n\in\Z\right\},
  \end{equation}
  where $\{\cdot\}_p:\Q_p\rar\R$ is the $p$-adic fractional part.
\end{theorem}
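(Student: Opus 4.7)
The plan is to follow the same template used to prove Theorem~\ref{thm.BRSforA/Q}, with $\A/\Q$ replaced throughout by $X_\mc{Q}=\A_\mc{Q}/\Gamma_\mc{Q}$. The first step is to identify the Pontryagin dual of $X_\mc{Q}$ with $\Gamma_\mc{Q}$, the characters being
\begin{equation*}
\chi_\gamma(x)=e^{2\pi i(\gamma x_\infty-\sum_{p\in\mc{Q}}\{\gamma x_p\}_p)},\qquad \gamma\in\Gamma_\mc{Q}.
\end{equation*}
A product-formula computation for $\Gamma_\mc{Q}$ (namely $\eta-\sum_{p\in\mc{Q}}\{\eta\}_p\in\Z$ for every $\eta\in\Gamma_\mc{Q}$) shows that these characters are trivial on the diagonal copy of $\Gamma_\mc{Q}$ and hence descend to $X_\mc{Q}$. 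Since $\alpha_\infty\notin\Q$, the orbit $\{n\alpha\}_{n\in\N}$ is dense in $X_\mc{Q}$, so $T_\alpha$ is uniquely ergodic with pure point spectrum $\{\chi_\gamma(\alpha):\gamma\in\Gamma_\mc{Q}\}$.

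For necessity, suppose $A$ is a BRS with volume $|A|$. A standard coboundary argument (the measurable version of Gottschalk--Hedlund, applicable because $T_\alpha$ is uniquely ergodic) produces $f\in L^\infty(X_\mc{Q})$ with $\chi_A-|A|=f-f\circ T_\alpha$. Since $\chi_A$ is integer valued, $f\circ T_\alpha-f\equiv|A|\pmod{\Z}$, so $\phi:=e^{2\pi if}$ is a nonzero measurable eigenfunction of $T_\alpha$ with eigenvalue $e^{2\pi i|A|}$. Ergodicity forces $\phi$ to be a scalar multiple of a character $\chi_\gamma$, whence $e^{2\pi i|A|}=\chi_\gamma(\alpha)$ and $|A|\equiv\gamma\alpha_\infty-\sum_{p\in\mc{Q}}\{\gamma\alpha_p\}_p\pmod{\Z}$. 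Replacing $\gamma$ by $-\gamma$ and using the identity $\{-x\}_p+\{x\}_p\in\Z$ rewrites this in the form required by~\eqref{eqn.SetOfVols2}.

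For sufficiency, I would construct an explicit BRS of each allowed volume $v=-\gamma\alpha_\infty+\sum_{p\in\mc{Q}}\{\gamma\alpha_p\}_p+n\ge0$ by adapting the cut-and-project / quasicrystal construction of \cite{HaynKoiv2016,HaynKellKoiv2017} that underlies Theorem~\ref{thm.BRSforA/Q}. The set $F=[0,1)\times\prod_{p\in\mc{Q}}\Z_p$ is a measurable fundamental domain for $\Gamma_\mc{Q}$ in $\A_\mc{Q}$ (by strong approximation), so $X_\mc{Q}$ may be identified with $F$ equipped with the product Haar measure. The BRS is built by comparing $F$ with the translate $F-\gamma\alpha$ and then attaching $n$ additional copies of $F$; bounded discrepancy is established via the deformation-theoretic lemma from \cite{HaynKellKoiv2017}, the key geometric input being that the window $\prod_{p\in\mc{Q}}\Z_p$ in the internal space is a product of $p$-adic balls and hence has boundary of Haar measure zero.

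The main technical obstacle is transferring the geometric step of \cite{HaynKellKoiv2017} from the full adelic internal space $\prod_{p}\Q_p$ to the restricted internal space $\prod_{p\in\mc{Q}}\Q_p$. When $\mc{Q}$ is finite the internal space becomes finite-dimensional and one must reorganise the choice of fundamental domain accordingly; when $\mc{Q}$ is infinite but proper one has to verify that the compactness and measure-theoretic inputs of the original argument survive restriction. Once that verification is in place, the remainder of the proof of Theorem~\ref{thm.BRSforSolenoids} follows the same outline as Theorem~\ref{thm.BRSforA/Q}.
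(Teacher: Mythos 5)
Your necessity argument is sound and essentially coincides with the paper's Section 3: the coboundary characterization of BRS's (Lemma \ref{lem.CoBound}), followed by the observation that $e(g)$ is a measurable eigenfunction of $T_\alpha$ with eigenvalue $e(|A|)$, which must therefore equal a character value $\psi_\gamma^\mc{Q}(\alpha)$. That half is fine. But two structural points are off. First, the logical order is inverted: in the paper, Theorem \ref{thm.BRSforA/Q} is the special case $\mc{Q}=\mc{P}$ of Theorem \ref{thm.BRSforSolenoids} and is \emph{deduced} from it, so there is no independent proof of Theorem \ref{thm.BRSforA/Q} whose ``template'' you can follow. Likewise, the paper proves the \emph{finite}-$\mc{Q}$ case first and obtains the infinite case by reduction --- for fixed $\gamma$ only the finitely many primes with $|\alpha_p|_p>1$ or $|\gamma|_p>1$ contribute, so one constructs the BRS in $X_{\mc{Q}'}$ for a finite $\mc{Q}'$ and crosses with $\prod_{p\in\mc{Q}\setminus\mc{Q}'}\Z_p$ --- rather than by ``restricting'' a full-adelic argument to $\mc{Q}$.

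Second, and more seriously, the sufficiency direction, which is the bulk of the paper, is not actually carried out. ``Comparing $F$ with the translate $F-\gamma\alpha$ and attaching $n$ copies of $F$'' does not by itself produce a set of volume $-\gamma\alpha_\infty+\sum_{p}\{\gamma\alpha_p\}_p+n$, and the decisive input is not that $\prod_{p}\Z_p$ has boundary of measure zero. The paper's construction requires: (i) taking the window to be $\pi_F$ of a fundamental domain of the tilted subgroup $L=\{(x,x\lambda)\}$ with $\lambda=\lambda_1/\lambda_2$, where $\lambda_1=\sum_{i}\{\gamma\alpha_{p_i}\}_{p_i}+n$ and $\lambda_2=-\gamma$, with $n$ chosen so that $\lambda+\alpha$ is a unit in $\A_\mc{Q}$ --- this invertibility is exactly what makes the decomposition \eqref{eqn.LatDecomp} work, and the paper explicitly cautions that the Euclidean intuition you are implicitly relying on fails here; (ii) computing the volume via the Weil product formula, which yields $\xi/M$ rather than $\xi$, so that one must assemble $M$ translated copies to reach volume $\xi$; and (iii) handling general $\gamma$ by writing $\xi'=m\xi+m'$ with possibly $m'<0$, which forces a verification that every point of the projected set has at least $|m'|$ preimages before removing copies of $X_\mc{Q}$. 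None of these steps appears in your proposal, nor is an alternative supplied; you yourself name the ``main technical obstacle'' and leave it open. As written, the construction of BRS's of all allowable volumes --- the content that goes beyond the routine restriction-of-volumes argument --- is missing.
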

Our strategy will be to prove Theorem \ref{thm.BRSforSolenoids} first in the case when $\mc{Q}$ is a finite set of primes. From this we will then deduce the full statement of the theorem (and hence, of Theorem \ref{thm.BRSforA/Q}). Our method of proof should be flexible enough to give a classification of BRS's for rotations on $(\A/\Q)^s$, for $s\ge 2$. The notation required for the proof of Theorem \ref{thm.BRSforSolenoids} is already complex, so we will not attempt to establish a higher dimensional version in this paper.

This paper is organized as follows. In Section \ref{sec.Back} we will establish notation and review basic background material which we will need for our proofs. In Section \ref{sec.RestVol} we will show how a dynamical argument together with Fourier analysis on the adeles can be used to restrict the possible volumes of BRS's for a given rotation to a countable set. In Section \ref{sec.ConstVol}, which contains the bulk of the proof, we employ a cut and project set construction to give examples of BRS's of all possible allowable volumes.

\section{Background and Notation}\label{sec.Back}
In this section we introduce our notation and give a brief review of the results from $p$-adic analysis which we will use. For more details concerning background material the reader is encouraged to consult \cite[Chapter 1]{Kobl1984} and \cite[Chapter 3]{RamaVale1999}.

In all of what follows we use the symbols $p$ and $q$ to denote prime numbers. For each prime $p$, we write $|\cdot |_p$ for the usual $p$-adic absolute value on $\Q$, and $\Q_p$ for the field of $p$-adic numbers, the completion of $\Q$ with respect to $|\cdot|_p$. The absolute value $|\cdot|_p$ extends in the natural way from $\Q$ to $\Q_p$, and we write $\Z_p$ for the ring of $p$-adic integers, the subring of elements $x\in\Q_p$ with $|x|_p\le 1$. To avoid confusion with other metrics, and with our notation for Haar measure, we write $|\cdot|_\infty$ for the usual Archimedean absolute value on $\R$.

Every element $x\in\Q_p$ has a unique expansion of the form
\begin{equation}
  x=\sum_{i=-N}^{-1}b_ip^i+\sum_{i=0}^\infty b_ip^i,
\end{equation}
with $N\ge 0$ and $b_i\in\{0,1,\ldots , p-1\}$ for each $i$. Given this expansion, we define the $p$-adic fractional part of $x$ to be the rational number
\begin{equation}
  \{x\}_p=\sum_{i=-N}^{-1}b_ip^i,
\end{equation}
with the usual convention that the empty sum takes the value $0$.

It is well known that $\Q_p$ is a totally disconnected, locally compact field, with Pontryagin dual $\widehat{\Q}_p\cong\Q_p$. To make this more precise, for each $y\in\Q_p$ the function $\psi_y:\Q_p\rar\C$ defined by
\begin{equation}
  \psi_y(x)=e(\{yx\}_p),
\end{equation}
where $e(z)=e^{2\pi i z}$, is a continuous character on the additive group of $\Q_p$, and the map $y\mapsto \psi_y$ is a topological group isomorphism from $\Q_p$ to $\widehat{\Q}_p$.


Now suppose that $\mc{Q}=\{p_1,p_2,\ldots\}$ is a subset of $\mc{P}$. As a set, $\A_\mc{Q}$ is defined to be the collection of elements
\begin{equation}
  \alpha=(\alpha_\infty,\alpha_{p_1},\alpha_{p_2},\ldots)\in\R\times\prod_{p\in\mc{Q}}\Q_p
\end{equation}
which satisfy $\alpha_p\in\Z_p$ for all but finitely many primes $p\in\mc{Q}$. It follows from the strong triangle inequality for the non-Archimedean absolute values that the elements of $\A_\mc{Q}$ form a ring under pointwise addition and multiplication. A natural topology on $\A_\mc{Q}$ is the restricted product topology, with respect to the sets $\Z_p$, for $p\in\mc{Q}$. With this topology the additive group of $\A_\mc{Q}$ is a locally compact topological group. This group is also self dual, with an explicit isomorphism given by the map from $\A_\mc{Q}$ to $\widehat{\A}_\mc{Q}$ defined by $\beta\mapsto\psi_\beta^\mc{Q},$ where
\begin{equation}
  \psi_\beta^\mc{Q}(\alpha)=e(-\beta_\infty\alpha_\infty)\cdot\prod_{p\in\mc{Q}}e(\{\beta_p\alpha_p\}_p).
\end{equation}
Note that in this product, all but finitely many terms are equal to $1$.

The group $\Gamma_\mc{Q}=\Z[1/p_1,1/p_2,\ldots]$ embeds in $\A_\mc{Q}$ by the map $\gamma\mapsto (\gamma,\gamma,\gamma\ldots)$, and we identify it with its image under this map. With this identification, it is easy to check that $\Gamma_\mc{Q}$ is a discrete and closed subgroup of $\A_\mc{Q}$, and that a strict fundamental domain for the quotient group $X_\mc{Q}$ is given by the collection of points
\begin{equation}
  [0,1)\times\prod_{p\in\mc{Q}}\Z_p\subseteq\A_\mc{Q}.
\end{equation}
Therefore the group $X_\mc{Q}$ is compact, and its dual group is the subset of characters on $\A_\mc{Q}$ which are trivial on $\Gamma_\mc{Q}$. More explicitly, the map from the discrete group $\Gamma_\mc{Q}$ to $\widehat{X}_S$ given by $\gamma\mapsto \psi_\gamma^\mc{Q}$ is an isomorphism of topological groups.

To conclude this preliminary section we explain why the hypothesis in Theorem \ref{thm.BRSforSolenoids} that $\alpha_\infty\notin\Q$ is necessary and sufficient to guarantee that the collection of points $\{n\alpha\}_{n\in\N}$ is dense (and in fact that the map $T_\alpha$ is uniquely ergodic) in $X_\mc{Q}$. It is easy to see that this hypothesis is necessary for density, since otherwise the real component of $n\alpha$ will always lie in a finite set. For the other direction we use the following general form of Weyl's criterion.
\begin{lemma}\cite[Chapter 4, Corollary 1.2]{KuipNied1974}
  Suppose that $G$ is a compact Abelian group. A sequence $\{x_n\}_{n\in\N}\subseteq G$ is uniformly distributed in $G$ with respect to Haar measure if and only if, for every nontrivial character $\chi\in\widehat{G}$,
  \begin{equation}
    \lim_{N\rar\infty}\frac{1}{N}\sum_{n=1}^N\chi(x_n)=0.
  \end{equation}
\end{lemma}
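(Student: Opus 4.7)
The plan is to establish the two directions of the standard Weyl equidistribution criterion for compact abelian groups by passing through the equivalent ``integrated'' form of uniform distribution and then exploiting density of trigonometric polynomials.

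First I would record the preliminary equivalence: a sequence $\{x_n\}_{n\in\N}\subseteq G$ is uniformly distributed with respect to Haar measure $\mu$ if and only if, for every continuous $f:G\rar\C$,
\[
\lim_{N\rar\infty}\frac{1}{N}\sum_{n=1}^N f(x_n) \;=\; \int_G f\,d\mu.
\]
In one direction this is immediate from the definition of uniform distribution by specializing to simple functions and approximating continuous $f$ by them; in the other direction one sandwiches the indicator function of a continuity set $A$ between continuous functions whose integrals differ by at most $\epsilon$, which is possible in a compact metrizable group because $\mu(\partial A)=0$.

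For the forward implication of the criterion, note that every character $\chi$ is continuous, so the integrated form applied to $f=\chi$ gives $\frac{1}{N}\sum_{n=1}^N\chi(x_n)\rar\int_G\chi\,d\mu$. If $\chi$ is nontrivial, choose $g\in G$ with $\chi(g)\ne 1$; by translation-invariance of Haar measure,
\[
\int_G\chi(y)\,d\mu(y) \;=\; \int_G\chi(y+g)\,d\mu(y) \;=\; \chi(g)\int_G\chi(y)\,d\mu(y),
\]
which forces $\int_G\chi\,d\mu=0$.

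For the converse, assume the character averages vanish for every nontrivial $\chi$. By linearity and the fact that $\int 1\,d\mu=1$, the convergence $\frac{1}{N}\sum_{n=1}^N P(x_n)\rar\int_G P\,d\mu$ extends to every trigonometric polynomial $P$, i.e.\ every finite linear combination of characters. The set of such $P$ is a unital, conjugation-closed subalgebra of $C(G,\C)$ that separates points, the latter because the dual group separates points on any compact abelian group (a consequence of Pontryagin duality). Hence by Stone--Weierstrass it is uniformly dense in $C(G,\C)$. Given any continuous $f$ and $\epsilon>0$, picking $P$ with $\|f-P\|_\infty<\epsilon$ and applying a three-$\epsilon$ estimate yields the integrated condition for $f$, which by the preliminary equivalence is the desired uniform distribution. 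The main conceptual obstacle is density of trigonometric polynomials, which rests on the point-separation property of the dual group; everything else reduces to linearity and standard approximation.
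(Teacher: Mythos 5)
Your proof is correct and is essentially the standard argument for the Weyl criterion on compact abelian groups: reduce to continuous test functions, show nontrivial characters integrate to zero via translation invariance, and use Stone--Weierstrass (with point-separation of $\widehat{G}$ from Pontryagin/Peter--Weyl) to pass from characters to all of $C(G,\C)$. The paper itself gives no proof, citing \cite[Chapter 4, Corollary 1.2]{KuipNied1974}, and your argument is the same one found in that reference, so there is nothing to reconcile.
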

If $\alpha_\infty\notin\Q$ and $\gamma\in\Gamma_\mc{Q}\setminus\{0\}$, we have that
\begin{equation}
  \frac{1}{N}\sum_{n=1}^N\psi_\gamma^\mc{Q}(n\alpha)=\frac{1}{N}\sum_{n=1}^Ne\left(n\left(-\gamma\alpha_\infty+\sum_{p\in\mc{Q}}\{\gamma\alpha_p\}_p\right)\right),
\end{equation}
and the right hand side tends to $0$ as $N\rar\infty$. In this calculation we have used the facts that the sum over $p\in\mc{Q}$ contains only finitely many non-zero terms, and also that
\begin{equation}
  \{n\gamma\alpha_p\}_p-n\{\gamma\alpha_p\}_p\in\Z,
\end{equation}
for all $n\in\Z$.

Finally we mention that, for rotations on compact metrizable groups, e.g. $T_\alpha$ on $X_\mc{Q}$, existence of dense orbits and unique ergodicity of the rotation with respect to Haar measure are equivalent (see \cite[Theorem 4.14]{EinsWard2011}). Therefore the condition that $\alpha_\infty\notin\Q$ is necessary and sufficient for $T_\alpha$ to be uniquely ergodic. This is important in what follows, as unique ergodicity implies that the convergence of Birkhoff sums to their ergodic averages is independent of the starting point for the averaging. In our context this means that, when $\alpha_\infty\notin\Q$, translates of BRS's for $T_\alpha$ are also BRS's.

\section{Restriction of Possible Volumes}\label{sec.RestVol}
In this section we will prove that, for any $\alpha$ satisfying the hypotheses of Theorem \ref{thm.BRSforSolenoids}, the set of all possible volumes of BRS's for $T_\alpha$ is a subset of the set in \eqref{eqn.SetOfVols2}. The line of this simple argument is not new and indeed has been used in other contexts by many authors (e.g. \cite{FursKeynShap1973}, \cite[Theorem 2]{Hala1976}, \cite[Section 2.2]{GrepLev2015}, and \cite[Chapter 14]{GottHedl1955}). We only include it for the sake of completeness. First we establish the following elementary lemma.
\begin{lemma}\label{lem.CoBound}
  A measurable set $A\subseteq X_\mc{Q}$ is BRS for $T_\alpha$ if and only if there exists a bounded, measurable function $g:X_\mc{Q}\rar\R$ satisfying
  \begin{equation}\label{eqn.CoBoundEqn}
    \chi_A(x)-|A|=g(x)-g(x+\alpha),
  \end{equation}
  for all $x\in X_\mc{Q}$.
\end{lemma}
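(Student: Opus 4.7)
This lemma is an instance of the classical equivalence between bounded Birkhoff sums of a zero-mean observable and solvability of the coboundary equation, so my plan is to prove each direction by direct computation.

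For the ``if'' direction, assume a bounded measurable $g$ satisfying \eqref{eqn.CoBoundEqn} exists. Applying the relation at the points $x,x+\alpha,\ldots,x+(N-1)\alpha$ and summing yields the telescoping identity
\begin{equation*}
\sum_{n=0}^{N-1}\bigl(\chi_A(x+n\alpha)-|A|\bigr)=g(x)-g(x+N\alpha),
\end{equation*}
whose absolute value is bounded by $2\|g\|_\infty$ for every $x$ and every $N$, so $A$ is a BRS.

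For the ``only if'' direction, write $S_k(x)=\sum_{n=0}^{k-1}(\chi_A(x+n\alpha)-|A|)$. By assumption there is a $T_\alpha$-invariant full-measure set $X_0\subseteq X_\mc{Q}$ and a constant $C$ with $|S_k(x)|\le C$ for all $k\ge 0$ and all $x\in X_0$. I would then define, for $x\in X_0$,
\begin{equation*}
g(x)=\limsup_{N\rar\infty}\frac{1}{N}\sum_{k=0}^{N-1}S_k(x),
\end{equation*}
and extend by $0$ off $X_0$. Measurability follows because $g$ is a pointwise limsup of measurable functions, and boundedness by $C$ is clear. The shift identity $S_k(x+\alpha)=S_{k+1}(x)-(\chi_A(x)-|A|)$ gives
\begin{equation*}
\frac{1}{N}\sum_{k=0}^{N-1}S_k(x+\alpha)=\frac{1}{N}\sum_{k=0}^{N-1}S_k(x)+\frac{S_N(x)-S_0(x)}{N}-\bigl(\chi_A(x)-|A|\bigr),
\end{equation*}
and since the second term on the right is $O(1/N)$ and the third term is independent of $N$, taking $\limsup_{N\rar\infty}$ on both sides yields $g(x+\alpha)=g(x)-(\chi_A(x)-|A|)$ on $X_0$.

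The only subtlety is promoting the equation from ``almost everywhere'' to ``everywhere''. Unique ergodicity of $T_\alpha$ (guaranteed by $\alpha_\infty\notin\Q$, as explained in Section \ref{sec.Back}) allows us to choose $X_0$ to be $T_\alpha$-invariant, after which the equation can be arranged to hold for every $x\in X_\mc{Q}$ by modifying $\chi_A$ on the null set $X_\mc{Q}\setminus X_0$; this does not affect whether $A$ is a BRS. This null-set bookkeeping is the only mildly technical point in the argument; the real content is the averaging construction of $g$ above.
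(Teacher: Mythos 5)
Your proof is correct, and both directions are in the same spirit as the paper's, but the ``only if'' construction is a genuine (if minor) variant. Where you set $g(x)=\limsup_{N}\frac{1}{N}\sum_{k=0}^{N-1}S_k(x)$ and verify the coboundary identity by averaging the shift relation and killing the $O(1/N)$ error, the paper takes the more economical route of defining $g(x)=\liminf_{N}S_N(x)$ directly and obtaining \eqref{eqn.CoBoundEqn} by applying $\liminf$ to the single recursion $S_{N+1}(x)=(\chi_A(x)-|A|)+S_N(x+\alpha)$; since the added term is a constant, the liminf passes through with no averaging needed. Your Ces\`aro step is correct (the identity $S_k(x+\alpha)=S_{k+1}(x)-(\chi_A(x)-|A|)$ and the $O(1/N)$ bookkeeping both check out), it just does more work than necessary. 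On the almost-everywhere issue: you are right that the BRS definition only gives $|S_k(x)|\le C$ for a.e.\ $x$, and your observation that the set $\{x:\sup_k|S_k(x)|<\infty\}$ is $T_\alpha$-invariant is the correct way to organize the bookkeeping; the paper simply defines $g$ by its formula on all of $X_\mc{Q}$ and does not comment on the null set, so your treatment is if anything more careful than the published one. Note only that ``modifying $\chi_A$ on the null set'' cannot literally force $\chi_A(x)-|A|=0$ there when $0<|A|<1$, so the honest resolution is to read \eqref{eqn.CoBoundEqn} as holding off a $T_\alpha$-invariant null set (which is all that is used later in Section \ref{sec.RestVol}, where the identity is only needed in $L^2$); this imprecision is inherited from the statement of the lemma itself and is not a defect of your argument.
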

\begin{proof}
First of all, suppose that $A$ is any measurable set for which there exists a function $g$ as in the statement of the lemma. Then we have that
\begin{align}
  \left|\sum_{n=0}^{N-1}\chi_A(x+n\alpha)-N|A|\right|=\left|g(x)-g(x+N\alpha)\right|\le 2 \|g\|_\infty,
\end{align}
which shows that $A$ is a BRS for $T_\alpha$.

For the other direction, suppose that $A$ is a measurable BRS for $T_\alpha$ and define $g:X_\mc{Q}\rar\R$ by
\begin{equation}
g(x)=\liminf_{N\rar\infty}\left(\sum_{n=0}^{N-1}\chi_A(x+n\alpha)-N|A|\right).
\end{equation}
Then $g$ is bounded and measurable. Furthermore, for any $N\in\N$ we have that
\begin{equation}
 \chi_A(x)-|A|+\sum_{n=0}^{N-1}\left(\chi_A(x+(n+1)\alpha)-|A|\right)=\sum_{n=0}^N\left(\chi_A(x+n\alpha)-|A|\right),
\end{equation}
and taking the liminf of both sides of the equation, we see that \eqref{eqn.CoBoundEqn} holds.
\end{proof}
Note that Lemma \ref{lem.CoBound} immediately extends to cover the case of multisets, i.e. when $\chi_A$ is replaced by a finite sum of indicator functions of measurable sets. Now suppose that $A\subseteq X_\mc{Q}$ is a measurable BRS for $T_\alpha$, let $g(x)$ be as in the statement of the lemma above, and define $h:X_\mc{Q}\rar\C$ by
\begin{equation}
  h(x)=e(g(x)).
\end{equation}
Then we have that
\begin{equation}
  h(x+\alpha)=e(g(x)-\chi_A(x)+|A|)=e(|A|)h(x).
\end{equation}
Writing the Fourier coefficients of $h$ as $\widehat{h}(\gamma),$ for $\gamma\in\Gamma_\mc{Q}$, we also have that
\begin{align}
  h(x+\alpha)=\sum_{\gamma\in\Gamma_\mc{Q}}\widehat{h}(\gamma)\psi_\gamma^\mc{Q}(\alpha)\psi_\gamma^\mc{Q}(x).
\end{align}
Combining these two formulas, we obtain the expansion
\begin{equation}
  0=\sum_{\gamma\in\Gamma_\mc{Q}}\left(e(|A|)-\psi_\gamma^\mc{Q}(\alpha)\right)\widehat{h}(\gamma)\psi_\gamma^\mc{Q}(x).
\end{equation}
Since $h(x)$ is not the identically zero function, applying the Plancherel theorem gives that
\begin{equation}
  e(|A|)=\psi_\gamma^\mc{Q}(\alpha),
\end{equation}
for some $\gamma\in\Gamma_\mc{Q}$. By the definition of $\psi_\gamma^\mc{Q}$, this shows that $|A|$ must lie in the set \eqref{eqn.SetOfVols2}.

\section{Construction of BRS's of All Allowable Volumes}\label{sec.ConstVol}
In this section we complete the proof of Theorem \ref{thm.BRSforSolenoids}, and therefore also Theorem \ref{thm.BRSforA/Q}, by constructing BRS's of all possible volumes in \eqref{eqn.SetOfVols2}. Our argument proceeds in several steps. First we introduce the cut and project set construction and we show how it can be used to produce a countable family of BRS's for $T_\alpha$. Then we specialize to the case when $\mc{Q}$ is finite, and we show that every volume in the set \eqref{eqn.SetOfVols2} can be realized as a finite sum of volumes of BRS's from our cut and project set construction. Finally, we show how the result of Theorem \ref{thm.BRSforSolenoids}, for the case when $\mc{Q}$ is infinite, can be deduced from the finite case.

\subsection{Cut and project set construction}\label{sec.CutAndProj}
Suppose that $G$ is a locally compact Abelian group and that $E$ and $F$ are complementary subgroups of $G$, so that $E\cap F=\{0\}$ and $G=E+F$. Let $\pi_E:G\rar E$ and $\pi_F:G\rar F$ be the projections onto $E$ and $F$ with respect to this decomposition.

Suppose that $\Gamma$ is a lattice in $G$, i.e. a discrete subgroup for which the quotient $G/\Gamma$ has a measurable fundamental domain of finite volume. Since $G$ is Abelian, this is equivalent to requiring that $\Gamma$ be discrete and co-compact. Let $A\subseteq F$ and set $S=A+E$. The cut and project set $Y=Y(A)\subseteq E$ defined by this data is
\begin{equation}
  Y=\pi_E(\Gamma\cap S).
\end{equation}
To match this with standard terminology in the study of such sets, the group $G$ is the total space, $E$ is the physical space, $F$ is the internal space, $A$ is the window, and $S$ is the strip. A schematic summarizing this construction is provided in Figure 1.

\begin{figure}\label{fig.CutAndProj}
\caption{Definition of a cut and project set.}
\[
  \begin{array}{ccccc}
    E & \xleftarrow{\pi_E} & X & \xrightarrow{\pi_F} & F \\
     \cup &  & \vee &  & \cup \\
     Y&  & \Gamma &  & A \vspace*{.07in}\\
     y& \leftarrow & \gamma & \rar  & y^*\\
  \end{array}\]
\end{figure}

For our application, using the notation in the statement of Theorem \ref{thm.BRSforSolenoids}, we take $G=\A_\mc{Q}\times\A_\mc{Q},$
\begin{equation}
  F=\{(0,x):x\in\A_\mc{Q}\},
\end{equation}
and
\begin{equation}
  E=\{(x,-x\alpha):x\in\A_\mc{Q}\},
\end{equation}
where, by slight abuse of notation, $\alpha$ is taken to be any representative in $\A_\mc{Q}$ for the quantity occurring in the statement of Theorem \ref{thm.BRSforSolenoids}. The subspace $E$ does technically depend on this choice of representative, but the collection of bounded remainder sets which we construct in what follows does not. For the lattice $\Gamma$ we take
\begin{equation}
  \Gamma=\{(\gamma_1,\gamma_2):\gamma_1,\gamma_2\in\Gamma_\mc{Q}\}\cong\Gamma_\mc{Q}\times\Gamma_\mc{Q},
\end{equation}
where, as above, $\Gamma_\mc{Q}=\Z[1/p_1,1/p_2,\ldots].$

We are left with a choice of window $A\subseteq F$. For this, let $\lambda\in\Gamma_\mc{Q}$ and define
\begin{align}
  L&=\{(x,x\lambda):x\in\A_\mc{Q}\}\leqslant G,\\
  \Gamma_1&=\{(\gamma,\gamma\lambda):\gamma\in\Gamma_\mc{Q}\}\leqslant\Gamma,~\text{and}\\
  \Gamma_2&=\{(0,\gamma):\gamma\in\Gamma_\mc{Q}\}\leqslant\Gamma.
\end{align}
Then every element of $\Gamma$ can be written uniquely as an element of $\Gamma_1$ plus an element of $\Gamma_2,$ the group $\Gamma_1$ is a lattice in $L$, and a fundamental domain for $L/\Gamma_1$ is given by
\begin{equation}\label{eqn.LWindow}
W=\left\{(x,x\lambda):x\in[0,1)\times\prod_{p\in\mc{Q}}\Z_p\right\}.
\end{equation}
Finally, we take $A=\pi_F(W)$ and we let $S$ and $Y$ be defined as above.

In order to clarify the connection between the cut and project set we have constructed and the dynamics of $T_\alpha$ on $\A_\mc{Q}$, we first observe that the projections $\pi_E$ and $\pi_F$ are given explicitly as
\begin{align}\label{eqn.Projs}
  \pi_E((x,y))=(x,-x\alpha)\quad\text{and}\quad\pi_F((x,y))=(0,y+x\alpha).
\end{align}
Let us identify $F$ with $\A_\mc{Q}$ (and $A$ with the corresponding subset of $\A_\mc{Q}$), by ignoring the first coordinate. The computations in \eqref{eqn.Projs} show that the cut and project set $Y$ can be described as the set of all points $(\gamma_1,-\gamma_1\alpha),$ with $\gamma_1\in\Gamma_\mc{Q},$ for which there exists a point $\gamma_2\in\Gamma_\mc{Q}$ such that $\gamma_2+\gamma_1\alpha\in A$. If, for a given value of $\gamma_1$, there is more than one such $\gamma_2$, then the corresponding point in the cut and project set occurs with appropriate multiplicity.

To summarize, for any $\gamma_1\in\Gamma_\mc{Q}$, the multiplicity with which the point $(\gamma_1,-\gamma_1\alpha)$ occurs in $Y$ is equal to the value of $\chi_A(\gamma_1\alpha)$, where $\chi_A:X_\mc{Q}\rar\R$ is the indicator function of the projection to $X_\mc{Q}$ of $A$, viewed as a multiset in $X_\mc{Q}$. Recall that, because of unique ergodicity (see comments at the end of Section 2), the quantities that we are ultimately interested in studying are sums of the form
\begin{equation}\label{eqn.IndFuncSum}
  \sum_{n=0}^{N-1}\chi_A(n\alpha),
\end{equation}
and, from our discussion so far, we can now see that these correspond precisely to numbers of points in subsets of $Y$ which lie in bounded regions determined by $N$.

\subsection{Case when $\mc{Q}$ is finite}

Suppose that $\mc{Q}=\{p_1,p_2,\ldots ,p_k\}$ is a finite set of primes, and let $\gamma\in\Gamma_\mc{Q}$ and $n\in\Z$ be chosen so that the number $\xi$ defined by
 \begin{equation}\label{eqn.XiForm}
  \xi=-\gamma\alpha_\infty+\sum_{i=1}^k\{\gamma\alpha_{p_i}\}_{p_i}+n
  \end{equation}
  is non-negative. We now demonstrate how to construct a BRS for $T_\alpha$ of volume $\xi$, and for this purpose we may assume, without loss of generality, that $\gamma\not= 0$.
  
  First we establish that, for each positive integer $\ell$, if $\gamma=\pm1/(p_1\cdots p_k)^\ell$, then there is at least one integer $n$ for which we can construct a bounded remainder set of the above volume. With reference to the formula for $\xi$ in \eqref{eqn.XiForm}, let
\begin{equation}
  \lambda_1=\sum_{i=1}^k\{\gamma\alpha_{p_i}\}_{p_i}+n\quad\text{and}\quad\lambda_2=-\gamma,
\end{equation}
and set $\lambda=\lambda_1/\lambda_2$. Suppose now that $n$ is chosen so that
  \begin{equation}\label{eqn.UnitHyp}
  |\lambda+\alpha_\infty|_\infty\cdot\prod_{i=1}^k|\lambda+\alpha_{p_i}|_{p_i}\not=0.
  \end{equation}
This is always possible because $\alpha_\infty\not\in\Q$, so product on the left of \eqref{eqn.UnitHyp} will vanish if and only if $\lambda=-\alpha_{p_i}$, for some $1\le i\le k$.
  
For every $1\le j\le k$ we have
\begin{equation}
  \lambda_1+\lambda_2\alpha_{p_j}=-\gamma\alpha_{p_j}+\sum_{i=1}^k\{\gamma\alpha_{p_i}\}_{p_i}+n\in\Z_{p_j},
\end{equation}
so it follows that
\begin{equation}
  \prod_{i=1}^k|\lambda_1+\lambda_2\alpha_{p_i}|_{p_i}=\frac{1}{M},
\end{equation}
for some $M\in\N$. Therefore, using the Weil product formula, we have that
\begin{align}\label{eqn.XiVolCalc}
  |\lambda+\alpha_\infty|_\infty\cdot\prod_{i=1}^k|\lambda+\alpha_{p_i}|_{p_i}=\frac{|\lambda_1+\lambda_2\alpha_\infty|_\infty\cdot\prod_{i=1}^k|\lambda_1+\lambda_2\alpha_{p_i}|_{p_i}}{|\lambda_2|_\infty\cdot\prod_{i=1}^k|\lambda_2|_p}=\frac{\xi}{M}.
\end{align}

With reference to the cut and project set constructed in Section \ref{sec.CutAndProj}, we will argue that, for each $\varsigma\in\Gamma_\mc{Q}$, there is exactly one $\varsigma'\in\Gamma_\mc{Q}$ for which
\begin{equation}\label{eqn.LatDecomp}
  (0,\varsigma)+(\varsigma',\varsigma'\lambda)\in S.
\end{equation}
We caution that the usual intuition from Euclidean space (in which the analogous statement is close to obvious) is not entirely applicable in our setting. In order to prove the result here, it is necessary to use the fact that $\lambda+\alpha\in\A_\mc{Q}^*$. Let $\beta=(\lambda+\alpha)^{-1}\in\A_\mc{Q}$ and define a map $\rho:\Gamma_2\rar L$ by
\begin{equation}
  \rho((0,\varsigma))=(\varsigma\beta,\varsigma\beta\lambda).
\end{equation}
Then we have
\begin{equation}
  (0,\varsigma)-\rho((0,\varsigma))=(-\varsigma\beta,\varsigma\beta\alpha)\in E,
\end{equation}
and this implies that
\begin{align}
  S\cap (L+(0,\varsigma))&=(0,\varsigma)+\left((S-(0,\varsigma))\cap L\right)\\
  &=(0,\varsigma)+(W-\rho((0,\varsigma))).
\end{align}
Since $W$ is a fundamental domain for $L/\Gamma_1$ we can now set
\begin{equation}
  (\varsigma',\varsigma'\lambda)=(W-\rho((0,\varsigma)))\cap\Gamma_1,
\end{equation}
and the claim at the beginning of the paragraph follows. Since every element of $\Gamma$ is uniquely expressible as an element of $\Gamma_1$ plus an element of $\Gamma_2,$ this also shows that there is a bijective correspondence between elements of $S\cap\Gamma$ and decompositions of the form \eqref{eqn.LatDecomp}.

Putting all of this together, for each $\varsigma\in\Gamma_\mc{Q}$ let $\varsigma'\in\Gamma_\mc{Q}$ be determined by \eqref{eqn.LatDecomp}. Then the quantity in \eqref{eqn.IndFuncSum} is equal to
\begin{align}
  &\#\{\varsigma\in\Gamma_\mc{Q}:\varsigma'\in\Z,0\le \varsigma'< N\}  \\
  &\qquad =\#\left\{(\Z\cap [0,N))\cap\bigcup_{\varsigma\in\Gamma_\mc{Q}}\left(\varsigma\beta+\left([0,1)\times\prod_{i=1}^k\Z_{p_i}\right)\right)\right\} \\
  &\qquad =\#\left\{\left([0,1)\times\prod_{i=1}^k\Z_{p_i}\right)\cap\bigcup_{\varsigma\in\Gamma_\mc{Q}}\left((\Z\cap [0,N))-\varsigma\beta\right) \right\}\\
  &\qquad=N\cdot|\lambda+\alpha_\infty|_\infty\cdot\prod_{i=1}^k|\lambda+\alpha_{p_i}|_{p_i}+O(1).
\end{align}
This shows that $A$ is a BRS for $T_\alpha$ which, using \eqref{eqn.XiVolCalc}, has volume given by
\begin{equation}\label{eqn.WindowVols}
  |A|=|\lambda+\alpha_\infty|_\infty\cdot\prod_{i=1}^k|\lambda+\alpha_{p_i}|_{p_i}=\frac{\xi}{M}.
\end{equation}
We then take $A'$ to be the multiset obtained as the image under the canonical projection $\tau:\A_\mc{Q}\rar X_\mc{Q}$ of the set
\begin{equation}
\mc{B}=[0,M\cdot |\lambda+\alpha_\infty|_\infty)\times\prod_{i=1}^k \overline{B}(0,|\lambda+\alpha_{p_i}|_{p_i})\subseteq\A_\mc{Q}.
\end{equation}
This set is a BRS for $T_\alpha$ of volume $\xi$.

To summarize, at this point we have succeeded in showing that for each $\gamma$ of the form $\pm1/(p_1\cdots p_k)^\ell$, there is at least one choice of $n$ for which we can construct a BRS for $T_\alpha$ of volume $\xi$. For the general case, first note that, for any $\gamma\in\Gamma_{\mc{Q}}$ and for any integer $m$,
\begin{equation}
m\{\gamma\alpha_{p_i}\}_{p_i}-\{m\gamma\alpha_{p_i}\}_{p_i}\in\Z,
\end{equation}
for each $1\le i\le k$. Now suppose that $\gamma'\in\Gamma_{\mc{Q}}$ and $n'\in\Z$ are chosen so that the number $\xi'$ defined by
\begin{equation}
\xi'=-\gamma'\alpha_\infty+\sum_{i=1}^k\{\gamma'\alpha_{p_i}\}_{p_i}+n'
\end{equation}
is non-negative. Then there must be $\gamma, n,$ and $\xi$ of the special form above, together with integers $m\in\N$ and $m'\in\Z$, with the property that
\begin{equation}
\xi'=m\xi+m'.
\end{equation}

If $m'\ge 0$, then we succeed easily. In that case we may simply take our BRS to be any multiset consisting of $m$ copies of the set $A'$ constructed above, together with $m'$ copies of $X_p$.

For $m'<0$, we would like to take $m$ copies of the image of $A'$ under the projection $\tau$ to $X_p$, and then remove $|m'|$ copies of $X_p$. However, we need to be careful to make sure that each point in our projected set has at least $|m'|$ preimages to remove. Therefore, to justify this step, begin by forming a union of $m$ disjoint copies of $A$ in $F$ to obtain the set
\begin{equation}
\mc{B}'=[0,mM\cdot  |\lambda+\alpha_\infty|_\infty)\times\prod_{i=1}^k \overline{B}(0,|\lambda+\alpha_{p_i}|_{p_i})\subseteq\A_\mc{Q}.
\end{equation}
This set is a Cartesian product of balls in $\R$ and in the spaces $\Q_{p_i}$. Write $z$ for the length of the real interval in this product, and suppose that the product of the volumes of the 
$p_i$-adic balls is $R/S$. Then, if $x$ is any point in $\mc{B}'$, there are at least
\[\left\lfloor \frac{zR}{S}\right\rfloor =\left\lfloor|\mc{B}'|\right\rfloor\]
integers $a$ with the property that (using the diagonal embedding) the point
\[x+\frac{aS}{R}\]
also lies in $\mc{B}'$. All of these points are equivalent modulo $\Gamma_{\mc{Q}}$, which shows that every point in $\tau (\mc{B}')$ has at least $\lfloor |\mc{B}'|\rfloor$ preimages in the original set.  Since $\xi'>0$, the volume of $\mc{B}'$ must be greater than $|m'|$, so this gives the result that we wanted, and it completes the proof of Theorem \ref{thm.BRSforSolenoids} in the special case when $\mc{Q}$ is finite.

\subsection{Case when $\mc{Q}$ is infinite} To prove Theorem \ref{thm.BRSforSolenoids} in the general case, suppose that $\gamma\in\Gamma_\mc{Q}$ and $n\in\Z$ are chosen so that
\begin{equation}
  \xi=-\gamma\alpha_\infty+\sum_{p\in\mc{Q}}\{\gamma\alpha_p\}_p+n\ge 0.
\end{equation}
Let $\mc{Q}'\subset\mc{Q}$ be the set of primes $p$ in $\mc{Q}$ for which $|\alpha_p|_p>1$ or $|\gamma|_p>1$, and let $\alpha'$ be the canonical projection of $\alpha$ to $X_{\mc{Q}'}$. Note that $\gamma\in\Gamma_{\mc{Q}'}$ and that
\begin{equation}
  \{\gamma\alpha_p\}_p=0\quad\text{for all}\quad p\in\mc{Q}\setminus\mc{Q}'.
\end{equation}
Since $\mc{Q}'$ is a finite collection of primes, it follows from what we have already shown that we can construct a BRS $A'\subseteq X_{\mc{Q}'}$ of volume $\xi$ for the rotation $T_{\alpha'}$ on $X_{\mc{Q}'}$. Then, since $\alpha_p\in\Z_p$ whenever $p\in\mc{Q}\setminus\mc{Q}'$, we have that the set
\begin{equation}
  A'\times\prod_{p\in\mc{Q}\setminus\mc{Q}'}\Z_p
\end{equation}
is a BRS of volume $\xi$ for $T_\alpha$. This completes the proofs of our main theorems.

\vspace{.15in}

{\footnotesize
\noindent
JF,~AH: Department of Mathematics, University of Houston,\\
Houston, TX, United States.\\
jfurno@math.uh.edu,~haynes@math.uh.edu\\

\noindent
HK: Faculty of Mathematics, University of Vienna,\\
Vienna, Austria.\\
henna.koivusalo@univie.ac.at
}

\end{document}